\documentclass[a4paper,11pt]{amsart}

\usepackage[a4paper,margin=1in]{geometry}
\usepackage[english]{babel}
\usepackage{amsmath}
\usepackage{amssymb}
\usepackage{multirow}
\usepackage{amsfonts}
\usepackage{amsthm}
\usepackage{indentfirst}
\usepackage{bbm}
\usepackage{color}
\usepackage{bigints}
\usepackage{graphicx}
\usepackage{booktabs}
\usepackage{array}

\newlength{\tblwidth}
\newcolumntype{L}{l}

\newcommand{\dx}{\,dx}
\newtheorem{theorem}{Theorem}[section]

\newtheorem{proposition}[theorem]{Proposition}

\newtheorem{remark}{Remark}[section]

\newtheorem{definition}{Definition}[section]

\title[Anti-Gauss Lagrange interpolation]{Anti-Gauss Lagrange interpolation: Christoffel-Darboux form, barycentric representation, and orthogonal expansion}

\author[P. D\'iaz de Alba]{P. {D\'iaz de Alba}$^{*}$}
\address{Department of Mathematics and Computer Science, University of Cagliari, Via Ospedale 72, 09124, Cagliari, Italy}
\email{patricia.diazda@unica.it}

\author[L. Fermo]{L. Fermo$^{*}$}
\address{Department of Mathematics and Computer Science, University of Cagliari, Via Ospedale 72, 09124, Cagliari, Italy}
\email{fermo@unica.it}

\author[V. Loi]{V. Loi$^{\dagger}$}
\address{Department of Science and High Technology, Insubria University, Via Valleggio 11, 22100, Como, Italy}
\email{vloi@studenti.uninsubria.it}

\makeatletter
\def\@setaddresses{}
\makeatother

\begin{document}

\begin{abstract}
The paper deals with new formulations of a Lagrange interpolant polynomial based on the nodes of the well-known anti-Gauss rule. A first representation is given in terms of the classical Christoffel-Darboux kernel appropriately modified. The second one closely follows the barycentric form of the classical Lagrange polynomial, while the third formulation represents the interpolant as a combination of an orthonormal family of polynomials with respect to the discrete anti-Gauss inner product. A numerical test shows the performance of the explored forms.
\end{abstract}

\keywords{Lagrange interpolation; orthogonal polynomials; anti-Gauss formula; barycentric formula; Christoffel-Darboux}

\maketitle
\begingroup
\renewcommand{\thefootnote}{\fnsymbol{footnote}}
\footnotetext[1]{Department of Mathematics and Computer Science, University of Cagliari, Via Ospedale 72, 09124, Cagliari, Italy (\texttt{patricia.diazda@unica.it}; \texttt{fermo@unica.it})}
\footnotetext[2]{Department of Science and High Technology, Insubria University, Via Valleggio 11, 22100, Como, Italy (\texttt{vloi@studenti.uninsubria.it})}
\endgroup

\section{Introduction}
The Lagrange interpolation of a given function $f$ at a set of nodes $\{x_j\}_{j=1}^n$  has attracted the attention of many researchers for both theoretical and computational reasons \cite{gautschi2012}. 
Theoretically,  the central issue concerns convergence, which is strongly influenced by the so-called Lebesgue constants, the best behaviour of which is logarithmic divergence \cite{faber1914,vertesi1990}. Consequently, considerable effort has been devoted to identify sets of interpolation nodes that guarantee such behaviour \cite{MMlibro}. Several studies \cite{MMlibro} have shown that, in specific function spaces, good candidates  are zeros of orthogonal polynomials $\{p_n\}_n$ with respect to a weight function $w$, i.e.,\begin{equation*}
\langle p_n, p_m \rangle_w =\int_{a}^b p_n(x) p_m(x) w(x) \dx =
\begin{cases}
0, & n \neq m, \\
c_n>0, & n=m.
\end{cases}
\end{equation*}

In terms of numerical computation, a new formulation of the Lagrange polynomial, i.e. the barycentric form, was developed in the late 1940s. Its advantages, robustness, and numerical stability have been emphasised in \cite{berrut2004,higham2004}. When the interpolation nodes coincide with the zeros of orthogonal polynomials, a more stable formulation can be obtained through the Christoffel-Darboux kernel, which plays an important role in the theory of orthogonal polynomials and approximation of functions; see, for instance \cite{nevai1986}. 

In 1996, D. Laurie introduced for the first time new polynomials $\tilde{p}_{n+1}$ of degree $n+1$ \cite{Laurie1} defined in terms of the classical monic orthogonal polynomials $\{p_n\}$ 
\begin{equation}\label{eq:def-ptilde}
\tilde{p}_{n+1}(x):=2p_{n+1}(x)+(a_n-x) p_{n}(x), \qquad a_n =\frac{\langle x p_n,p_n\rangle_w}{\langle p_n,p_n\rangle_w}.
\end{equation}
In that paper, he studied these polynomials proving that their zeros $\tilde{x}_k$ are real, distintic, belong to the interval $[a,b]$ for a large class of weight functions $w$, and interlace the zeros of $p_n$. These polynomials were employed in the context of the numerical integration. In particular, Laurie developed the so-called anti-Gauss quadrature rule whose $n+1$ nodes are the zeros $\tilde{x}_k$ of the polynomial given in \eqref{eq:def-ptilde}. This quadrature formula is of considerable interest for several reasons, ranging from estimating the error of the classical Gauss rule to construct more accurate schemes. Consequently, it has been generalized and extended in various directions \cite{DFR2026,Djukic2023,PranicReichel,ReichelSpalevic2021,
ReichelSpalevic2026,Spalevic2017}, and applied in several contexts  
\cite{DFR2020,DFR2025,djukic2025,Fermo2025,FRRS2024,ReichelSpalevic2022}.

Very recently, polynomials \eqref{eq:def-ptilde} were carefully analysed to assess whether their zeros provide a suitable set of nodes  for creating optimal Lagrange type interpolation processes on the real semi-axes \cite{FO2026}. Motivated by this result, and by virtue of new current studies on bounded domains, the aim of this paper is to provide new formulations for the Lagrange polynomial based on the zeros of $\tilde{p}_{n+1}$, here after named \textit{anti-Gauss Lagrange polynomial}
\begin{equation}\label{eq:lagrange}
\tilde{L}_{n+1}(f,x):=\sum_{k=1}^{n+1}\tilde{\ell}_k(x)f(\tilde{x}_k), \qquad \tilde{\ell}_k(x)= \prod_{\substack{j=1 \\ j\neq k}}^{n+1} \frac{x-\tilde{x}_j}{\tilde{x}_k-\tilde{x}_j}=\frac{\tilde{p}_{n+1}(x)}{\tilde{p}_{n+1}'(\tilde{x}_k)(x-\tilde{x}_k)}.
\end{equation} 

The main goal of Section 
\ref{sec:Darboux} is to provide a modified Christoffel-Darboux kernel adapted to our polynomial and, based on it, to obtain a first stable representation of the Lagrange polynomial \eqref{eq:lagrange}. Section \ref{sec:barycentric} is devoted to the derivation and analysis of a barycentric form of the interpolant. 
In Section \ref{sec:modal}, we also provide an orthogonal expansion. We emphasize that, in this last case, with respect to the Lagrange polynomial base on the zeros of the classical orthogonal polynomials, the main novelty lies in the proof of this representation. In fact,  the exactness of the Gauss or anti-Gauss quadrature rule cannot be exploited for the expression of the related coefficients \cite[Chapter 4]{MMlibro}. Finally, in Section \ref{sec:tests}, we provide a numerical example showing the performance of the proposed formulations.

\section{Anti-Gauss Lagrange polynomial in a Christoffel-Darboux  type form}\label{sec:Darboux}
The aim of this section is to prove that the fundamental Lagrange polynomial $\tilde{\ell}_k$ can be expressed in terms of the Christoffel numbers of the anti-Gauss formula \cite{Laurie1} and a suitably modified classical Christoffel--Darboux kernel \cite[Chapter 2]{MMlibro}. 
As a consequence, we derive a first, stable formulation of the Lagrange polynomial; see Theorem \ref{teo:darb}. 
\begin{definition}\label{def:modK}
Let $\{p_n(w,x)\}_n$ the sequence of orthonormal polynomials with respect to $w$,  $p_n(w,x)=\gamma_n x^n+\ldots$, $\gamma_n>0$.  We define the modified Christoffel-Darboux kernel as
\begin{equation}\label{Ktilde}
\tilde{K}_n(w,x,y):=K_n(w,x,y)-\frac{1}{2} p_n(w,x) p_n(w,y), \quad K_n(w,x,y)=\sum_{j=0}^{n} p_j(w,x) p_j(w,y), \qquad n \geq 0.
\end{equation}
\end{definition}
\begin{proposition} 
For every $x \neq y$, the modified Darboux kernel \eqref{Ktilde} can be expressed as
\begin{equation}\label{eq:Ktilde}
\tilde{K}_n(w,x,y)
=
\frac{p_n(y)\tilde{p}_{n+1}(x)-p_n(x)\tilde{p}_{n+1}(y)}{2c_n(x-y)}, \quad n \geq 0.
\end{equation}
\end{proposition}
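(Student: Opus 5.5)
The plan is to reduce \eqref{eq:Ktilde} to the classical Christoffel--Darboux identity and then absorb the correction term $-\tfrac12 p_n(w,x)p_n(w,y)$ through the definition \eqref{eq:def-ptilde} of $\tilde p_{n+1}$. Throughout, $p_n$ denotes the monic orthogonal polynomial and $c_n=\langle p_n,p_n\rangle_w$, as in the Introduction. The orthonormal polynomials are then $p_n(w,x)=p_n(x)/\sqrt{c_n}$, so $\gamma_n=c_n^{-1/2}$.

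First I will invoke the classical Christoffel--Darboux formula \cite[Chapter 2]{MMlibro}, valid for $x\neq y$ and $n\ge 0$:
\begin{equation*}
K_n(w,x,y)=\frac{\gamma_n}{\gamma_{n+1}}\,\frac{p_{n+1}(w,x)p_n(w,y)-p_n(w,x)p_{n+1}(w,y)}{x-y}.
\end{equation*}
I will then rewrite it in monic form. Since $\gamma_n/\gamma_{n+1}=\sqrt{c_{n+1}/c_n}$ and each product of orthonormal polynomials carries the factor $1/\sqrt{c_nc_{n+1}}$, the constants collapse to $1/c_n$:
\begin{equation*}
K_n(w,x,y)=\frac{p_{n+1}(x)p_n(y)-p_n(x)p_{n+1}(y)}{c_n(x-y)}.
\end{equation*}
Similarly, the correction term becomes $\tfrac12 p_n(w,x)p_n(w,y)=\dfrac{p_n(x)p_n(y)}{2c_n}$.

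Next I will expand the numerator on the right-hand side of \eqref{eq:Ktilde} using \eqref{eq:def-ptilde}. This gives
\begin{equation*}
p_n(y)\tilde p_{n+1}(x)-p_n(x)\tilde p_{n+1}(y)=2\bigl[p_{n+1}(x)p_n(y)-p_n(x)p_{n+1}(y)\bigr]+\bigl[(a_n-x)-(a_n-y)\bigr]p_n(x)p_n(y).
\end{equation*}
The $a_n$ terms cancel, and the last bracket equals $-(x-y)$. Dividing by $2c_n(x-y)$ yields $K_n(w,x,y)-\dfrac{p_n(x)p_n(y)}{2c_n}$, which is $\tilde K_n(w,x,y)$ by Definition~\ref{def:modK}.

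There is no real obstacle here; the argument is a direct computation. The only point needing care is the normalization, namely converting the orthonormal Christoffel--Darboux formula to monic polynomials with the correct constant $1/c_n$, and checking that $n=0$ is covered ($p_0=1$, $K_0=1/c_0$). For $n=0$ I will verify the identity directly from $p_1(x)=x-a_0$ as a sanity check.
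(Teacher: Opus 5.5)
Your proposal is correct and follows essentially the same route as the paper: convert the classical Christoffel--Darboux formula to monic form with constant $1/c_n$, then expand the numerator via \eqref{eq:def-ptilde} and use $\bigl[(a_n-x)-(a_n-y)\bigr]=-(x-y)$ to absorb the correction term $-\tfrac12 p_n(w,x)p_n(w,y)$. Your normalization bookkeeping and the $n=0$ check ($\tilde p_1(x)=x-a_0$, both sides equal $1/(2c_0)$) are also right.
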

\begin{proof}
From \cite[Theorem 2.2.3]{MMlibro}, one has
\begin{equation}\label{eq:Kn2}
K_n(w,x,y) = \frac{\gamma_{n}}{\gamma_{n+1}}
\frac{p_{n+1}(w,x){p}_{n}(w,y)-p_n(w,x) {p}_{n+1}(w,y)}{x-y}= 
\frac{p_{n+1}(x){p}_{n}(y)-p_n(x) {p}_{n+1}(y)}{c_n(x-y)},
\end{equation} 
being $p_n(w,x)=p_n(x)/\sqrt{c_n}$.
By combining \eqref{Ktilde} with the second identity of \eqref{eq:Kn2}, we obtain
$$2 c_n(x-y) \tilde{K}_n(w,x,y)=2\bigl[p_{n+1}(x)p_n(y)-p_n(x)p_{n+1}(y)\bigr]-(x-y)p_n(x)p_n(y). $$
At this point, the assertion follows noting that the last term can be written as
$$(x-y)p_n(x)p_n(y)=\bigl[(a_n-x)-(a_n-y)\bigr]p_n(x)p_n(y),$$
and using \eqref{eq:def-ptilde}. 
\end{proof}
In order to state our first main theorem, let us recall the Christoffel numbers of the anti-Gauss formula \cite{Notaris}
\begin{equation}\label{eq:lambdatilde}
\tilde{\lambda}_k:=\frac{2c_n}{p_n(\tilde{x}_k)\,\tilde{p}_{n+1}'(\tilde{x}_k)},
\qquad k=1,\dots,n+1.
\end{equation}
\begin{theorem}\label{teo:darb}
The following identities hold true for $n \geq 0$ and $k=1,\dots,n+1,$
\begin{equation}\label{eq:ellk-kernel}
\tilde{\ell}_k(x)=\tilde{\lambda}_k\,\tilde{K}_n(w,x,\tilde{x}_k), \qquad  
\tilde{\lambda}_k= \left[\tilde{K}_n(w,\tilde{x}_k,\tilde{x}_k)\right]^{-1}, \qquad \textrm{and} \qquad \tilde{\ell}_k(x)=
\frac{\tilde{K}_n(w,x,\tilde{x}_k)}{\tilde{K}_n(w,\tilde{x}_k,\tilde{x}_k)}. 
\end{equation} 
Consequently, the anti-Gauss Lagrange polynomial interpolating a continuous function $f$ defined in $[a,b]$ is given by
\begin{equation}\label{eq:LmK}
\tilde{L}_{n+1}(f,x)=\sum_{k=1}^{n+1} \tilde{\lambda}_k \tilde{K}_n(w,x,\tilde{x}_k) f(\tilde{x}_k).
\end{equation}
\end{theorem}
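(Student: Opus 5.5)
The plan is to evaluate the closed form of the Proposition, \eqref{eq:Ktilde}, at $y=\tilde{x}_k$. Since $\tilde{p}_{n+1}(\tilde{x}_k)=0$, the second term in the numerator drops out, and for $x\neq\tilde{x}_k$ we get
\begin{equation*}
\tilde{K}_n(w,x,\tilde{x}_k)=\frac{p_n(\tilde{x}_k)\,\tilde{p}_{n+1}(x)}{2c_n(x-\tilde{x}_k)}.
\end{equation*}
Multiplying by $\tilde{\lambda}_k$ from \eqref{eq:lambdatilde} cancels the factor $2c_n/p_n(\tilde{x}_k)$. What remains is $\tilde{p}_{n+1}(x)/\bigl(\tilde{p}_{n+1}'(\tilde{x}_k)(x-\tilde{x}_k)\bigr)$, which is exactly the second expression for $\tilde{\ell}_k(x)$ in \eqref{eq:lagrange}. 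Both sides are polynomials of degree at most $n$, and they agree for all $x\neq\tilde{x}_k$, so they agree identically. This gives the first identity in \eqref{eq:ellk-kernel}.

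One must check that the division is legitimate, namely that $p_n(\tilde{x}_k)\neq 0$. This follows from \eqref{eq:def-ptilde}. If $p_n(\tilde{x}_k)=0$, then $p_{n+1}(\tilde{x}_k)=0$ as well, so $p_n$ and $p_{n+1}$ would have a common zero. That contradicts the strict interlacing of zeros of consecutive orthogonal polynomials, or equivalently the three-term recurrence. The zeros $\tilde{x}_k$ are simple by Laurie's result, so $\tilde{p}_{n+1}'(\tilde{x}_k)\neq0$ and $\tilde{\lambda}_k$ is well defined. The case $n=0$ is degenerate but covered: $p_0=1$, and everything reduces to a trivial check.

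For the second identity, evaluate the first at $x=\tilde{x}_k$. Since $\tilde{\ell}_k(\tilde{x}_k)=1$, we get $1=\tilde{\lambda}_k\tilde{K}_n(w,\tilde{x}_k,\tilde{x}_k)$. Here $\tilde{K}_n(w,\tilde{x}_k,\tilde{x}_k)$ is the polynomial kernel \eqref{Ktilde} on the diagonal, not the quotient formula. Because the product equals $1$, this value is nonzero and equals $\tilde{\lambda}_k^{-1}$. It can also be checked directly as a limit via l'H\^opital, giving $p_n(\tilde{x}_k)\tilde{p}_{n+1}'(\tilde{x}_k)/(2c_n)$, which agrees. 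The third identity follows by substituting the second into the first.

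Finally, \eqref{eq:LmK} follows by inserting the first identity into the definition \eqref{eq:lagrange} of $\tilde{L}_{n+1}(f,x)$. The only delicate point in the whole argument is the nonvanishing of $p_n(\tilde{x}_k)$, handled above.
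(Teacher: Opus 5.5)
Your proposal is correct and follows essentially the same route as the paper: evaluate \eqref{eq:Ktilde} at $y=\tilde{x}_k$, cancel the factor $2c_n/p_n(\tilde{x}_k)$ against $\tilde{\lambda}_k$ to recover $\tilde{\ell}_k$, then evaluate at $x=\tilde{x}_k$ using $\tilde{\ell}_k(\tilde{x}_k)=1$. You also supply two details the paper leaves implicit: that $p_n(\tilde{x}_k)\neq 0$, since otherwise $p_n$ and $p_{n+1}$ would share a zero, and that the identity extends to $x=\tilde{x}_k$ because both sides are polynomials of degree at most $n$; these make the evaluation at the node fully rigorous.
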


\begin{proof}
Let us prove \eqref{eq:ellk-kernel}. An evaluation of \eqref{eq:Ktilde} at $y=\tilde{x}_k$ and considering that $\tilde{p}_{n+1}(\tilde{x}_k)=0$ yields
\[
\tilde{K}_n(w,x,\tilde{x}_k)
=
\frac{p_n(\tilde{x}_k)\tilde{p}_{n+1}(x)}{2c_n(x-\tilde{x}_k)}.
\]
Therefore, by \eqref{eq:lambdatilde} and \eqref{eq:lagrange}, we have  for $x\neq \tilde{x}_k$ that
$
\tilde{\lambda}_k\,\tilde{K}_n(w,x,\tilde{x}_k)
=
\dfrac{\tilde{p}_{n+1}(x)}{\tilde{p}_{n+1}'(\tilde{x}_k)(x-\tilde{x}_k)}
=\tilde{\ell}_k(x),
$
which proves the first thesis.
The second one immediately follows by evaluating the above expression at $x=\tilde{x}_j$ and using $\tilde\ell_k(\tilde x_k)=1$. The last identity of \eqref{eq:ellk-kernel} follows by combining the first two. Relation \eqref{eq:LmK} is a consequence of the first assertion.
\end{proof}

\section{The Barycentric formulation of the anti-Gauss Lagrange polynomial}
\label{sec:barycentric}
In this section, we give a barycentric formulation for the anti-Gauss Lagrange polynomial. This representation inherits essentially the same structure as the well-known barycentric form available in the literature \cite{berrut2004,higham2004}.
\begin{theorem}
The anti-Gauss Lagrange interpolant can be written in the following barycentric form for a continuous function $f$ defined in $[a,b]$
\begin{equation}\label{eq:barycentric}
\tilde{L}_{n+1}(f,x)
=
\frac{\displaystyle\sum_{k=1}^{n+1}\frac{\omega_k}{x-\tilde{x}_k}f(\tilde{x}_k)}{\displaystyle\sum_{k=1}^{n+1}\frac{\omega_k}{x-\tilde{x}_k}},
\qquad x\neq \tilde{x}_k, \qquad \omega_k:=\dfrac{1}{\tilde{p}'_{n+1}(\tilde{x}_k)}.
\end{equation} 
\end{theorem}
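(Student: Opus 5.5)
The plan is to follow the classical derivation of the second (true) barycentric formula. The key fact is that $\tilde{p}_{n+1}$ has degree $n+1$ and its $n+1$ distinct zeros are exactly $\tilde{x}_1,\dots,\tilde{x}_{n+1}$. Laurie's result recalled in the introduction gives that these zeros are real and distinct.

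First, from \eqref{eq:def-ptilde}, the leading coefficient of $\tilde{p}_{n+1}$ is $2-1=1$, since $p_{n+1}$ and $p_n$ are monic. Hence $\tilde{p}_{n+1}(x)=\prod_{j=1}^{n+1}(x-\tilde{x}_j)$ and $\tilde{p}_{n+1}'(\tilde{x}_k)=\prod_{j\neq k}(\tilde{x}_k-\tilde{x}_j)\neq 0$. So the weights $\omega_k$ are well defined. In fact the argument below does not even use monicity: any nonzero constant factor cancels in the final quotient.

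Second, for $x\neq\tilde{x}_k$ we use the last expression in \eqref{eq:lagrange}, $\tilde{\ell}_k(x)=\tilde{p}_{n+1}(x)\,\omega_k/(x-\tilde{x}_k)$. This gives the first barycentric form
\[
\tilde{L}_{n+1}(f,x)=\tilde{p}_{n+1}(x)\sum_{k=1}^{n+1}\frac{\omega_k}{x-\tilde{x}_k}f(\tilde{x}_k).
\]
Third, we apply this with $f\equiv 1$. Lagrange interpolation at $n+1$ distinct nodes reproduces polynomials of degree at most $n$, so $\sum_k\tilde{\ell}_k(x)\equiv 1$. Therefore
\[
1=\tilde{p}_{n+1}(x)\sum_{k=1}^{n+1}\frac{\omega_k}{x-\tilde{x}_k}.
\]
Finally, we divide the first barycentric form by this identity. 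The common factor $\tilde{p}_{n+1}(x)$ is nonzero for $x$ off the nodes, so it cancels and we obtain \eqref{eq:barycentric}.

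There is no genuine obstacle here. The only points needing care are the following.
\begin{itemize}
\item The nodes must be distinct and simple, so that $\tilde{p}_{n+1}'(\tilde{x}_k)\neq 0$. This is Laurie's theorem.
\item The denominator $\sum_k\omega_k/(x-\tilde{x}_k)$ must be nonzero off the nodes. This follows from the same identity, which shows it equals $1/\tilde{p}_{n+1}(x)$.
\end{itemize}

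As an optional cross-check, the weights can also be linked to the Christoffel-Darboux form of Theorem~\ref{teo:darb}. By \eqref{eq:lambdatilde}, $\omega_k=\tilde{\lambda}_k p_n(\tilde{x}_k)/(2c_n)$. This lets one write the same barycentric quotient with weights $\tilde{\lambda}_k p_n(\tilde{x}_k)$, since the factor $2c_n$ cancels.
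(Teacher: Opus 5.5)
Your proof is correct and follows the paper's route. The paper also obtains $\tilde{L}_{n+1}(f,x)=\tilde{p}_{n+1}(x)\sum_{k}\omega_k f(\tilde{x}_k)/(x-\tilde{x}_k)$ from the last expression in \eqref{eq:lagrange} and then divides by the identity $1=\sum_k\tilde{\ell}_k(x)$; your remarks on simple nodes and the nonvanishing denominator only make explicit what the paper leaves implicit.
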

\begin{proof}
By \eqref{eq:lagrange} we have
$
\tilde{L}_{n+1}(f,x)=\displaystyle \tilde{p}_{n+1}(x) \sum_{k=1}^{n+1} \frac{\omega_k}{x-\tilde{x}_k} f(\tilde{x}_k).
$ 
At this point \eqref{eq:barycentric} follows by considering that \\
$\displaystyle 1=\sum_{k=1}^{n+1}\tilde{\ell}_k(x)= \tilde{p}_{n+1}(x) \sum_{k=1}^{n+1} \frac{\omega_k}{(x-\tilde{x}_k)}.$
\end{proof}
\begin{remark}
Let us note that the barycentric weights $\omega_k$ involve the first derivative of $\tilde{p}_{n+1}$. Howevere, this computation can be avoid by using \eqref{eq:lambdatilde} getting $ \displaystyle
\omega_k=
\frac{\tilde{\lambda}_k\,p_n(\tilde{x}_k)}{2c_n}. $
\end{remark}

\section{The orthogonal expansion of the anti-Gauss Lagrange interpolant} \label{sec:modal}
In this section, we provide an orthogonal expansion of the anti-Gauss Lagrange interpolating polynomial. To this end, let us introduce the following modified orthonormal family
\begin{equation*}
\phi_j(x)=\begin{cases}
p_j(w,x), & j=0,\dots,n-1, \\
\frac{1}{\sqrt{2}} p_n(w,x), & j=n,
\end{cases}
\end{equation*} 
and note that the modified Darboux kernel introduced in Definition \ref{def:modK} can be expressed as
\begin{equation}\label{eq:Kn3}
\tilde{K}_n(w,x,y)= \sum_{j=0}^n \phi_j(x) \phi_j(y) = \boldsymbol{\phi}(x) \boldsymbol{\phi}(y)^T , \qquad \boldsymbol{\phi}(x)=[\phi_0(x),\ldots,\phi_n(x)].
\end{equation}
\begin{proposition}\label{prop:phi}
The family $\{\phi_0,\dots,\phi_n\}$ is orthonormal with respect to the discrete anti-Gauss inner product
\[
\langle f,g\rangle_{\mathrm{AG}}:=\sum_{k=1}^{n+1}\tilde{\lambda}_k f(\tilde{x}_k)g(\tilde{x}_k),
\]
namely, $\langle \phi_i,\phi_j\rangle_{\mathrm{AG}}=\delta_{ij}$, for $0\le i,j\le n$, where $\delta_{ij}$ stands for the Dirac delta.
\end{proposition}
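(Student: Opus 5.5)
The plan is to derive the discrete orthonormality from the interpolation property $\tilde\ell_k(\tilde x_j)=\delta_{jk}$, written in matrix form through Theorem \ref{teo:darb} and \eqref{eq:Kn3}. This avoids quadrature exactness arguments entirely, which matters because the anti-Gauss rule is not exact enough to integrate products $\phi_i\phi_j$ of degree up to $2n$.

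Let $\Phi$ be the $(n+1)\times(n+1)$ matrix whose $k$-th row is $\boldsymbol{\phi}(\tilde{x}_k)$, i.e. $\Phi_{k,i}=\phi_i(\tilde x_k)$. Let $\Lambda=\mathrm{diag}(\tilde\lambda_1,\dots,\tilde\lambda_{n+1})$. By the first identity in \eqref{eq:ellk-kernel} together with \eqref{eq:Kn3},
\[
\delta_{jk}=\tilde\ell_k(\tilde x_j)=\tilde\lambda_k\,\tilde K_n(w,\tilde x_j,\tilde x_k)=\tilde\lambda_k\,\boldsymbol{\phi}(\tilde x_j)\boldsymbol{\phi}(\tilde x_k)^T,\qquad j,k=1,\dots,n+1 .
\]
Here the case $j=k$ is exactly the second identity in \eqref{eq:ellk-kernel}. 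In matrix form this reads $\Phi\Phi^T\Lambda=I$.

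Since all matrices involved are square, $\Phi\Phi^T\Lambda=I$ already shows that $\Phi$ is invertible, with $\Phi^{-1}=\Phi^T\Lambda$. Independently, $\Phi$ is a nonsingular generalized Vandermonde matrix, because the $\phi_i$ form a basis of polynomials of degree at most $n$ and the $n+1$ nodes $\tilde x_k$ are distinct. Multiplying $\Phi^{-1}=\Phi^T\Lambda$ on the right by $\Phi$ gives $\Phi^T\Lambda\Phi=I$. Its $(i,j)$ entry is $\sum_{k}\tilde\lambda_k\phi_i(\tilde x_k)\phi_j(\tilde x_k)=\langle\phi_i,\phi_j\rangle_{\mathrm{AG}}$, which yields the claim.

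The only delicate point is the passage from the relation $\Phi\Phi^T\Lambda=I$, which orthogonalizes the nodes, to $\Phi^T\Lambda\Phi=I$, which orthogonalizes the functions. This passage is exactly the swap "left inverse equals right inverse" for square matrices, so it is immediate. One should also note that the $\tilde\lambda_k$ are finite and nonzero. Finiteness holds because $p_n(\tilde x_k)\neq 0$ by the interlacing of the zeros recalled in the introduction. The fact that they are nonzero is in any case forced by $\Phi\Phi^T\Lambda=I$.
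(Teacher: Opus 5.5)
Your proof is correct, but it runs dual to the paper's. The paper works with functions rather than nodal values. Since $\phi_i\in\mathbb P_n$, Lagrange interpolation reproduces it, $\phi_i(x)=\sum_k\tilde\ell_k(x)\phi_i(\tilde x_k)$. Inserting $\tilde\ell_k(x)=\tilde\lambda_k\sum_j\phi_j(x)\phi_j(\tilde x_k)$ gives $\phi_i=\sum_j\langle\phi_i,\phi_j\rangle_{\mathrm{AG}}\,\phi_j$, and linear independence of $\phi_0,\dots,\phi_n$ finishes. In your notation this is $\boldsymbol\phi(x)\,\Phi^T\Lambda\Phi=\boldsymbol\phi(x)$ for all $x$, so $\Phi^T\Lambda\Phi=I$ follows with no inverse swap. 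The matching of $n+1$ functions with $n+1$ nodes is absorbed into the uniqueness of interpolation.

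You instead evaluate the kernel identity only at the nodes, obtaining $\Phi\Phi^T\Lambda=I$, which is equivalent to \eqref{eq:nodal-kernel-AG}. You then use that a one-sided inverse of a square matrix is two-sided. This is precisely the converse of the remark the paper makes right after the proposition, which goes from $Q^TQ=I$ to $QQ^T=I$ and hence to \eqref{eq:nodal-kernel-AG}. So your argument shows that the proposition and \eqref{eq:nodal-kernel-AG} are equivalent via one linear-algebra fact. Working with $\Phi,\Lambda$ instead of $Q=\tilde\Lambda^{1/2}\Phi$ also never requires $\tilde\lambda_k>0$.

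The price is that you need the diagonal case $j=k$, i.e.\ the second identity in \eqref{eq:ellk-kernel}. The paper's proof only needs the kernel representation of $\tilde\ell_k$ at non-nodal $x$. With your proof in place, the paper's subsequent ``alternative proof'' of that second identity would become circular.

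Finally, quadrature is less unusable than your motivation suggests. Exactness on $\mathbb P_{2n-1}$ settles every pair except $i=j=n$. For that pair, Laurie's defining relation (the anti-Gauss error equals minus the $n$-point Gauss error on $\mathbb P_{2n+1}$) gives $\sum_k\tilde\lambda_k\,p_n(w,\tilde x_k)^2=2$. That is exactly what the factor $1/\sqrt2$ in $\phi_n$ compensates.
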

\begin{proof}
Firstly note that by combining \eqref{eq:ellk-kernel} with \eqref{eq:Kn3},  we have
\begin{equation}\label{eq:kernel-expansion-AG}
\tilde{\ell}_k(x)=\tilde{\lambda}_k\sum_{j=0}^{n}\phi_j(x)\phi_j(\tilde{x}_k), \qquad k=1,\ldots,n+1.
\end{equation}
For each fixed $i=0,\ldots,n$, since $\phi_i\in \mathbb{P
}_n$, the Lagrange interpolation formula gives
$ \displaystyle 
\phi_i(x)=\sum_{k=1}^{n+1}\tilde{\ell}_k(x)\phi_i(\tilde{x}_k),
$
from which substituting \eqref{eq:kernel-expansion-AG} yields
\[
\phi_i(x)=\sum_{j=0}^{n}\phi_j(x)\left(\sum_{k=1}^{n+1}\tilde{\lambda}_k\phi_j(\tilde{x}_k)\phi_i(\tilde{x}_k)\right)=\sum_{j=0}^{n}\phi_j(x) \langle \phi_i,\phi_j\rangle_{\mathrm{AG}} , \qquad i=0,\ldots,n.
\]
Then, a comparison of the coefficients furnishes the thesis.
\end{proof}
Note that the above proposition allow us to give also an alternative proof to the second identity of \eqref{eq:ellk-kernel}. In fact, let us define the square matrix $Q$ of order $n+1$ as
\begin{equation}\label{eq:def-Q-AG}
Q_{k,j}:=\sqrt{\tilde{\lambda}_k}\,\phi_j(\tilde{x}_k), \quad k=1,\dots,n+1, \quad j=0,\ldots,n.
\end{equation}
Note that by Proposition \ref{prop:phi}, it easily follows that $(Q^TQ)_{ij}=\delta_{ij}$, for  $0 \leq i,j \leq n$, from which $Q^TQ=I_{n+1}=QQ^T$, where $I_{n+1}$ represents the identity matrix of order $n+1$. Hence, $Q$ is an orthogonal matrix. Moreover, 
\[
(QQ^T)_{ik}
=\sum_{j=0}^{n}Q_{i,j}Q_{k,j}
=\sqrt{\tilde{\lambda}_i\tilde{\lambda}_k}\sum_{j=0}^{n}\phi_j(\tilde{x}_i)\phi_j(\tilde{x}_k)
=\sqrt{\tilde{\lambda}_i\tilde{\lambda}_k}\,\tilde{K}_n(w,\tilde{x}_i,\tilde{x}_k), \qquad \forall i,k=1,\ldots,n+1.
\]
Then, using $QQ^T=I_{n+1}$, we obtain 
\begin{equation}\label{eq:nodal-kernel-AG}
\tilde{K}_n(w,\tilde{x}_i,\tilde{x}_k)=\frac{\delta_{ik}}{\sqrt{\tilde{\lambda}_k \tilde{\lambda}_i}},
\qquad 1\le i,k\le n+1.
\end{equation}
\begin{theorem}
The anti-Gauss Lagrange interpolant of a function $f$ continuous in $[a,b]$ admits the orthogonal expansion
\begin{equation}\label{eq:modal-form-AG}
\tilde{L}_{n+1}(f,x)=\sum_{j=0}^{n} c_j\phi_j(x), \qquad c_j:=\sum_{k=1}^{n+1}\tilde{\lambda}_k f(\tilde{x}_k)\phi_j(\tilde{x}_k)=Q^T \tilde{\Lambda}^{1/2} {\bf{f}},
\qquad j=0,\dots,n,
\end{equation}
where $\tilde{\Lambda}:=\operatorname{diag}(\tilde{\lambda}_1,\dots,\tilde{\lambda}_{n+1})$,  ${\bf{f}}=[f(\tilde{x}_1), \ldots, f(\tilde{x}_{n+1})]^T$, and  $Q$ is given in \eqref{eq:def-Q-AG}.
\end{theorem}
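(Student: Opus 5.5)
The plan is to start from the Christoffel--Darboux representation of Theorem \ref{teo:darb}. By \eqref{eq:LmK} we have
\[
\tilde{L}_{n+1}(f,x)=\sum_{k=1}^{n+1}\tilde{\lambda}_k\tilde{K}_n(w,x,\tilde{x}_k)f(\tilde{x}_k).
\]
We then substitute the finite-sum expression \eqref{eq:Kn3} of the modified kernel,
\[
\tilde{K}_n(w,x,\tilde{x}_k)=\sum_{j=0}^n\phi_j(x)\phi_j(\tilde{x}_k).
\]
Both sums are finite, so they can be interchanged. This gives
\[
\tilde{L}_{n+1}(f,x)=\sum_{j=0}^n\phi_j(x)\sum_{k=1}^{n+1}\tilde{\lambda}_k f(\tilde{x}_k)\phi_j(\tilde{x}_k),
\]
and the inner sum is exactly $c_j$ as defined in \eqref{eq:modal-form-AG}. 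Equivalently, one can use \eqref{eq:kernel-expansion-AG} directly inside \eqref{eq:lagrange}.

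Next, we identify $c_j=\langle f,\phi_j\rangle_{\mathrm{AG}}$. As a consistency check, Proposition \ref{prop:phi} shows that $\{\phi_j\}$ is an orthonormal basis of $\mathbb{P}_n$ for the discrete inner product. Since $\tilde{L}_{n+1}(f)$ interpolates $f$ at the nodes, $\langle \tilde{L}_{n+1}(f),\phi_j\rangle_{\mathrm{AG}}=\langle f,\phi_j\rangle_{\mathrm{AG}}$. So the $c_j$ are the genuine Fourier coefficients of the interpolant in this basis, which justifies calling the expansion orthogonal. This route avoids any quadrature exactness, in line with the remark in the introduction.

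Finally, for the matrix form, write $\mathbf{c}=[c_0,\dots,c_n]^T$. Componentwise,
\[
c_j=\sum_k \sqrt{\tilde{\lambda}_k}\phi_j(\tilde{x}_k)\cdot\sqrt{\tilde{\lambda}_k}f(\tilde{x}_k)=\sum_k Q_{k,j}\bigl(\tilde{\Lambda}^{1/2}\mathbf{f}\bigr)_k=\bigl(Q^T\tilde{\Lambda}^{1/2}\mathbf{f}\bigr)_j,
\]
using definition \eqref{eq:def-Q-AG}.

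One point needs care: the square roots $\sqrt{\tilde{\lambda}_k}$ require $\tilde{\lambda}_k>0$. I would invoke the positivity of the anti-Gauss weights from \cite{Laurie1}, which is already implicitly used in defining $Q$. Alternatively, $\tilde{\lambda}_k>0$ follows from $\tilde{\lambda}_k=[\tilde{K}_n(w,\tilde{x}_k,\tilde{x}_k)]^{-1}$ in Theorem \ref{teo:darb}: by \eqref{eq:Kn3}, $\tilde{K}_n(w,\tilde{x}_k,\tilde{x}_k)$ is a sum of squares, and it cannot vanish because $\phi_0$ is a nonzero constant. With that settled there is no real obstacle; the proof is essentially an interchange of finite sums.
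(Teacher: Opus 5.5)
Your proof is correct, but it takes a more direct route than the paper.

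\textbf{The paper's route.} It defines $P_f(x)=\sum_{j=0}^n c_j\phi_j(x)$ and evaluates it at the nodes, rewriting $P_f(\tilde x_i)$ as $\sum_k\tilde\lambda_k f(\tilde x_k)\tilde K_n(w,\tilde x_i,\tilde x_k)$. It then uses the nodal identity \eqref{eq:nodal-kernel-AG} to get $P_f(\tilde x_i)=f(\tilde x_i)$. That identity comes from Proposition \ref{prop:phi} through the orthogonality of the square matrix $Q$ ($Q^TQ=I\Rightarrow QQ^T=I$). The paper finally invokes uniqueness of interpolation in $\mathbb{P}_n$.

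\textbf{Your route.} You substitute \eqref{eq:Kn3} into \eqref{eq:LmK} and swap two finite sums. This gives the identity directly as polynomials in $x$. It needs neither Proposition \ref{prop:phi}, nor the orthogonality of $Q$, nor the uniqueness argument.

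\textbf{What each buys.} Your route shows the expansion is just a rewriting of the Christoffel--Darboux form of Theorem \ref{teo:darb}. Discrete orthonormality is then needed only to \emph{interpret} $c_j$ as the Fourier coefficients $\langle f,\phi_j\rangle_{\mathrm{AG}}$, as you note. The paper's route puts the discrete orthogonal structure (the matrix $Q$ and the nodal kernel identity) in the foreground, which motivates the name ``orthogonal expansion'' and the matrix formula. It is longer, though, and \eqref{eq:nodal-kernel-AG} could itself be read off from \eqref{eq:ellk-kernel} by evaluating $\tilde\ell_k(\tilde x_i)=\delta_{ik}$.

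Your sum-of-squares argument for $\tilde\lambda_k>0$ is a worthwhile addition. Namely, $\tilde K_n(w,\tilde x_k,\tilde x_k)\ge\phi_0^2>0$ at the real nodes, because $\phi_0$ is a nonzero constant. The paper forms $\sqrt{\tilde\lambda_k}$ in \eqref{eq:def-Q-AG} without comment.
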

\begin{proof}
Let $f$ be a function, and define
$
\displaystyle P_f(x):=\sum_{j=0}^{n} c_j\phi_j(x),
$
with the coefficients $c_j$ given in \eqref{eq:modal-form-AG}. Then, for every $i=1,\dots,n+1$, we have
\begin{align*}
P_f(\tilde{x}_i)
=\sum_{j=0}^{n} \left[\sum_{k=1}^{n+1}\tilde{\lambda}_k f(\tilde{x}_k)\phi_j(\tilde{x}_k)\right]\phi_j(\tilde{x}_i)=\sum_{k=1}^{n+1}\tilde{\lambda}_k f(\tilde{x}_k) \left[\sum_{j=0}^{n} \phi_j(\tilde{x}_i) \phi_j(\tilde{x}_k) \right]= \sum_{k=1}^{n+1}\tilde{\lambda}_k f(\tilde{x}_k)\tilde{K}_n(w,\tilde{x}_i,\tilde{x}_k),
\end{align*}
where the last identity is due to \eqref{eq:Kn3}. Now, by virtue of \eqref{eq:nodal-kernel-AG}, we deduce
$
P_f(\tilde{x}_i)=\displaystyle \sum_{k=1}^{n+1}\tilde{\lambda}_k f(\tilde{x}_k)\frac{\delta_{ik}}{\sqrt{\tilde{\lambda}_k\tilde{\lambda}_i}}
=f(\tilde{x}_i).
$
Thus, $P_f\in\mathbb P_n$ interpolates $f$ at the $n+1$ distinct nodes $\tilde{x}_1,\dots,\tilde{x}_{n+1}$. By uniqueness of polynomial interpolation, we have $
P_f(x)=\tilde{L}_{n+1}(f,x)$, which proves \eqref{eq:modal-form-AG}.
\end{proof}

\section{A numerical test}\label{sec:tests}
The aim of this numerical test is to give a comparison 
of the proposed formulations in terms of stability.
In \eqref{eq:lagrange}, the fundamental polynomials $\tilde\ell_k(x)$ have a denominator that may become very small when $\tilde x_k$ is very close to $\tilde x_j$. Hence, for $n$ sufficiently large, we may have overflow in the computation of $\tilde\ell_k(x)$ for some $k$. Analogously, underflow can occur when the denominator is large and we evaluate the fundamental polynomial at a point close to a node $\tilde x_j$. Here, we want to explore whether the other formulations can avoid underflow and overflow problems, thereby enabling the construction of a Lagrange interpolant for sufficiently large $n$. This is crucial for approximating non-smooth functions with a certain level of accuracy.

Let us consider the anti-Gauss polynomial \eqref{eq:def-ptilde} defined in $[-1,1]$ associated to the monic Chebyshev orthogonal polynomials $p_n(x)= 2^{1-n} \cos{(n \arccos(x))}$, w.r.t. the weight $w(x)=(1-x^2)^{-1/2}$. In \cite[Theorem 2]{DFR2020} it has been proved that
$$\tilde{p}_{n+1}(x)=2^{1-n} U_{n-1}(x) (x^2-1), \qquad U_{n-1}(x)=\frac{\sin{(n \arccos{x})}}{\sin({\arccos{x}})}, \qquad n \geq 1.$$
Therefore, the zeros of such a  polynomial are
$\tilde{x}_k=\cos{\left((n-k+1)\frac{\pi}{n}\right)}$,  $k=1,\ldots,n+1$. Note that this set is an optimal set of nodes for the convergence \cite{MO2001}.
Let us now consider the Lagrange polynomial interpolating the functions $f_1(x)=|x|+\frac{x}{2}+x^2$ and $f_2(x)=e^{|x-0.5|^{5/2}}$ at the zeros $\tilde{x}_k$. Table \ref{table1} reports the maximum absolute error at $100$ equidistant points in $(-1,1)$, by using the four representations
$$\epsilon_i(f_j)=\|f_j-\tilde{L}_{n+1}(f_j)\|, \qquad i=\{N,K,B,O\}, \quad j=1,2,$$
where $\epsilon_N$ means that $\tilde{L}_{n+1}$ is computed using the nodal representation \eqref{eq:lagrange}; $\epsilon_K$ refer to the representation \eqref{eq:LmK}; whereas $\epsilon_B$ and $\epsilon_O$ concerns the barycentric form \eqref{eq:barycentric} and the orthogonal expansion \eqref{eq:modal-form-AG}, respectively. As we can see by Table \ref{table1}, the three developed forms are more stable than the nodal representation. Indeed, they allow the computation of the anti-Gauss Lagrange polynomial for large $n$, providing a more accurate approximation of the function $f$. 

We remark that in this specific case, thanks to \cite[Theorem 2]{DFR2020}, the barycentric weights are given by 
$$\omega_1=2^{n-2}/n \, (-1)^k  
, \qquad \omega_k=2^{n-1}/n \, \cos{(\pi(n-k+1))}, \quad k=2,\ldots,n, \qquad \omega_{n+1}= 2^{n-2}/n.$$
and the factor $2^{n-1}/n$ can be dropped since it appears both in the numerator and denominator of \eqref{eq:barycentric}. Without this simplification, the formula is far less stable. Indeed, in Table \ref{table1} we obtain NaN for $n=1024$ and $2048$.

\begin{table}
\centering
\small
\setlength{\tabcolsep}{3.5pt}
\renewcommand{\arraystretch}{1.12}
\caption{Abosule errors for the functions $f_j$ $j=1,2$}\label{table1}
\begin{tabular}{@{}rcccccccc@{}}
\toprule
 $n$ & $\epsilon_N(f_1)$ & $\epsilon_K(f_1)$ & $\epsilon_B(f_1)$ & $\epsilon_O(f_1)$ & $\epsilon_N(f_2)$ & $\epsilon_K(f_2)$ & $\epsilon_B(f_2)$ & $\epsilon_O(f_2)$  \\ 
\midrule
 
32 	& 1.77e-02 	 & 1.77e-02 	 & 1.77e-02 	 & 1.77e-02 & 9.95e-05 	 & 9.95e-05 	 & 9.95e-05 	 & 9.95e-05 \\ 
  64 	& 7.59e-03 	 & 7.59e-03 	 & 7.59e-03 	 & 7.59e-03  & 1.47e-05 	 & 1.47e-05 	 & 1.47e-05 	 & 1.47e-05\\ 
 128 	& 4.66e-03 	 & 4.66e-03 	 & 4.66e-03 	 & 4.66e-03 & 1.35e-06 	 & 1.35e-06 	 & 1.35e-06 	 & 1.35e-06 \\ 
 256 	& 9.45e-04 	 & 9.45e-04 	 & 9.45e-04 	 & 9.45e-04 & 3.82e-07 	 & 3.82e-07 	 & 3.82e-07 	 & 3.82e-07  \\ 
 512 	& 4.83e-04 	 & 4.83e-04 	 & 4.83e-04 	 & 4.83e-04  & 4.09e-08 	 & 4.09e-08 	 & 4.09e-08 	 & 4.09e-08\\ 
 1024 	&      NaN 	 & 1.05e-04 	 & 1.05e-04 	 & 1.05e-04  &      NaN 	 & 3.37e-09 	 & 3.37e-09 	 & 3.37e-09\\ 
 2048 	&      NaN 	 & 3.70e-05 	 & 3.70e-05 	 & 3.70e-05 &      NaN 	 & 1.39e-10 	 & 1.40e-10 	 & 1.39e-10 \\ 
\bottomrule
\end{tabular}
\end{table}
 
\section*{Acknowledgements}
The authors are members of the GNCS group. P. D\'iaz de Alba and L. Fermo are partially supported by INdAM-GNCS 2026 project ``Metodi numerici per modelli integrali e dinamiche con memoria''. L. Fermo is also partially supported by Fondazione di Sardegna biennial project 2024-2025 ``Integral and Discrete Inverse Problems (InDIP)''. V. Loi is partially supported by INdAM-GNCS 2026 project ``Metodi strutturati per il signal processing avanzato''.

\bibliographystyle{plain}

\bibliography{biblio}

\end{document}